\numberwithin{equation}{section}
\newcommand{\norm}[1]{\lVert{#1}\rVert}
\newcommand{\stdCone}{ {\mathcal{K}}}
\newcommand{\stdFace}{ \mathcal{F}}
\renewcommand{\Re}{\mathbb{R}}
\newcommand{\face}{\mathrel{\unlhd}}
\DeclareMathOperator{\lspan}{span}
\DeclareMathOperator{\dist}{dist}
\DeclareMathOperator{\interior}{int}
\DeclareMathOperator{\reInt}{ri}
\newcommand{\RR}{\mathbb{R}}
\newcommand{\R}{\mathbb{R}}
\newtheorem{definition}{Definition}[section]
\newtheorem{lemma}[definition]{Lemma}
\newtheorem{proposition}[definition]{Proposition}
\newtheorem{corollary}[definition]{Corollary}
\newtheorem{theorem}[definition]{Theorem}
\newtheorem*{proposition*}{Proposition}
\theoremstyle{remark}
\title{Hyperbolicity cones are amenable}
\author{Bruno F.\ Louren\c{c}o\thanks{Department of Statistical Inference and Mathematics, Institute of Statistical Mathematics, Japan. Email:~\texttt{bruno@ism.ac.jp}} \and Vera Roshchina\thanks{School of Mathematics and Statistics, UNSW Sydney, Australia. Email:~\texttt{v.roshchina@unsw.edu.au}} \and James Saunderson\thanks{Department of Electrical and Computer Systems Engineering, Monash University, Australia. Email:~\texttt{james.saunderson@monash.edu}}}
\begin{document}
\maketitle

\begin{abstract}
Amenability is a notion of facial exposedness for convex cones that is stronger
	than being facially dual complete (or `nice') which is, in turn,
	stronger than merely being facially exposed. Hyperbolicity cones are a
	family of algebraically structured closed convex cones that contain all
	spectrahedral cones (linear sections of positive semidefinite cones) as
	special cases. It is known that all spectrahedral cones are amenable.  We
	establish that all hyperbolicity cones are amenable. As part of the
	argument, we show that any face of a hyperbolicity cone is a
	hyperbolicity cone. As a corollary, we show that the intersection of
	two hyperbolicity cones, not necessarily sharing a common relative
	interior point, is a hyperbolicity cone.
\end{abstract}

%\tableofcontents 

\section{Introduction}
\label{sec:intro}
A homogeneous polynomial $p$ of degree $d$ in $n$ variables 
is \emph{hyperbolic with respect to $e\in \Re^n$} if $p(e)>0$ and, for each $x\in \Re^n$
the univariate polynomial $t\mapsto p(te-x)$ has only real zeros. Associated with a hyperbolic polynomial
is the (closed) hyperbolicity cone
\begin{equation}
	\label{eq:hypcone}
	\Lambda_+(p,e) \coloneqq \{x\in \Re^n \mid \textup{all zeros of $t\mapsto p(te-x)$ are nonnegative}\}.
\end{equation}
Hyperbolicity cones are convex cones that generalize \emph{spectrahedral cones}. Spectrahedral cones
are linearly isomorphic to the intersection of a cone of
symmetric positive semidefinite matrices with a linear space. 
Indeed, if $A_1,A_2,\ldots,A_n$ are symmetric matrices and 
$e\in \RR^n$ is such that $\sum_{i=1}^{n}A_ie_i$ is positive definite, then 
$p(x) = \det(\sum_{i=1}^{n}A_ix_i)$ is hyperbolic with respect to $e$, and the
associated hyperbolicity cone is the spectrahedral cone 
$\{x\in \RR^n\;:\;\sum_{i=1}^{n}A_ix_i \;\;\textup{is positive semidefinite}\}$.

The \emph{generalized Lax conjecture} asserts that every hyperbolicity cone is a 
spectrahedral cone. This is known to hold (in a stronger form) in dimension
three~\cite{helton2007linear,lewis2005lax}, but is unresolved in general. 
Examples of hyperbolic polynomials with hyperbolicity cones that are not currently 
known to be spectrahedral include
mixed discriminants of partially specified tuples of 
symmetric matrices (see, e.g.,~\cite[Section 6]{saunderson2020terracini}), 
certain hyperbolic polynomials associated 
with (hyper)graphs~\cite{amini2018non}, and certain cubic hyperbolic polynomials 
associated with the size of the largest clique in a graph~\cite{saunderson2019certifying}.

If the generalized Lax conjecture
were true, then any geometric property of spectrahedral cones should also hold for 
hyperbolicity cones. In this paper, we consider properties related to the faces (see Section~\ref{sec:prelim-error} for a definition) 
of hyperbolicity cones that are known to hold for spectrahedral cones. 

A closed convex cone $\stdCone$ is \emph{amenable} if, for each face $\stdFace$ of $\stdCone$, 
there exists a constant $\kappa>0$ such that 
\begin{equation}\label{eq:amenable}
 \textup{dist}(x,\stdFace) \leq \kappa\, \textup{dist}(x,\stdCone) \quad
\textup{for all $x\in \textup{span}(\stdFace)$}.
\end{equation}
The notion of amenable cones was introduced in~\cite{L19} in the context of
establishing error bounds for conic feasibility problems in the absence of
constraint qualifications. In our context, we are interested in amenability
because it is a strong notion of facial exposedness for convex cones. 

% A closed convex cone is \emph{facially exposed} if every face is the intersection of the cone with a 
% supporting hyperplane. A closed convex cone is \emph{nice} if, for each face $\stdFace$ of $\stdCone$, the set 
% $\stdCone^* + \stdFace^\perp$ is closed, where $\stdCone^*$ is the dual cone of $\stdCone$ and $\stdFace^\perp$ is
% the orthogonal complement of $\textup{span}(\stdFace)$. 
Every amenable cone is nice (or facially dual complete)~\cite[Proposition~13]{L19}, and every nice cone
is facially exposed~\cite[Theorem 3]{pataki}. All three notions coincide in dimension 
three (due to~\cite[Theorem 3.2]{PL88}).
However, in dimension four there are facially exposed cones that are not
nice~\cite{Vera}, and nice cones that are not amenable~\cite[Theorem 5.3]{LRS20}. Therefore amenability 
is a considerable strengthening of the property of being facially exposed.

It was recently established that spectrahedral cones are amenable~\cite[Corollary 3.5]{LRS20}. 
In light of the Lax conjecture, it is natural
to ask whether hyperbolicity cones are also amenable. Our main result is the following.
\begin{theorem}
	\label{thm:hyp_am}
	Every hyperbolicity cone is amenable.
\end{theorem}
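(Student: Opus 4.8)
The plan is to induct on the dimension $n$ (or, better, on the degree $d$ of the hyperbolic polynomial), using the announced structural fact that every face of a hyperbolicity cone is again a hyperbolicity cone. Fix a hyperbolic polynomial $p$ of degree $d$ hyperbolic with respect to $e$, write $\stdCone = \Lambda_+(p,e)$, and let $\stdFace$ be a proper face. Since amenability is a statement about \emph{all} faces of $\stdCone$, and since the faces of $\stdFace$ are faces of $\stdCone$, it suffices to produce, for each proper face $\stdFace$, a single constant $\kappa>0$ with $\dist(x,\stdFace)\le \kappa\,\dist(x,\stdCone)$ for all $x\in\lspan(\stdFace)$; the inductive hypothesis applied to $\stdFace$ (which is a hyperbolicity cone living in a lower-dimensional ambient space, or of strictly smaller degree) then takes care of the faces of $\stdFace$ that are properly contained in it. So the crux is the ``one-step'' inequality for a proper face.

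To get the one-step inequality I would first reduce to the situation where $\stdFace$ is an \emph{exposed} face, which is automatic here because hyperbolicity cones are facially exposed (every amenable or spectrahedral-like cone is, and this should follow from the face-structure results cited: a hyperbolicity cone is nice, hence facially exposed). Then pick a supporting hyperplane, i.e.\ a vector $s$ in the dual cone $\stdCone^*$ with $\stdFace = \stdCone \cap s^\perp$. The key analytic input is a quantitative, ``local'' description of $\stdCone$ near a point of $\reInt \stdFace$: because $p$ restricted to an appropriate affine slice factors, near such a point $\stdCone$ looks like the product of a lower-degree hyperbolicity cone (which \emph{is} $\lspan(\stdFace)$-direction, handled by induction) and a simple one-dimensional ``transversal'' piece. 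Concretely, if $x\in\lspan(\stdFace)$, decompose the deviation of $x$ from $\stdCone$ into a component along $\lspan(\stdFace)$ and a transversal component controlled by $\langle s,\cdot\rangle$; the transversal direction is where one needs a genuinely new estimate, bounding how fast the smallest hyperbolic eigenvalue (the smallest root of $t\mapsto p(te-x)$) can drop as one moves off $\stdFace$. This is the analogue, for hyperbolic polynomials, of the eigenvalue perturbation bounds that make spectrahedra amenable, and I expect it to come from the fact that the hyperbolic eigenvalues are Lipschitz (indeed, roots of a monic polynomial depending polynomially on $x$), together with the concavity of the smallest eigenvalue on $\stdCone$.

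The main obstacle, I expect, is precisely this transversal estimate: turning the qualitative ``$\stdFace$ is a hyperbolicity cone'' statement into a uniform constant $\kappa$. In the spectrahedral case one leans on the explicit matrix structure (e.g.\ Schur complements / the $2\times 2$ block trick) to bound $\dist(x,\stdFace)$; here one must instead work intrinsically with $p$, presumably by passing to the \emph{derivative relaxation} or to the restriction of $p$ to $\lspan(\stdFace)$ and an auxiliary transversal line, showing that on $\lspan(\stdFace)$ the cone $\stdCone$ is sandwiched between $\stdFace$ and a dilate of $\stdFace$ plus a controlled transversal slab. A clean way to organize this is: (i) prove the face-is-a-hyperbolicity-cone lemma and, as a byproduct, extract a supporting-hyperplane/local-factorization statement; (ii) establish the one-step error bound for an exposed proper face using that local factorization plus Lipschitz continuity of hyperbolic eigenvalues; (iii) assemble (i) and (ii) into the induction on dimension, with spectrahedral cones (or the trivial cases $d\le 2$, or one-dimensional cones) as the base case. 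Finally, the stated corollary about intersections follows by the standard trick of realizing $\Lambda_1\cap\Lambda_2$ as a face of $\Lambda_1\times\Lambda_2$ under the diagonal embedding and invoking step (i).
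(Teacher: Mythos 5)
Your proposal correctly identifies the two high-level ingredients---that faces of hyperbolicity cones are themselves hyperbolicity cones, and that the heart of the matter is a ``one-step'' error bound for a single proper face---but it misses the paper's key technical device and leaves the central estimate as an acknowledged gap. The paper does not attempt anything like your transversal/supporting-hyperplane decomposition, nor any Lipschitz estimate for hyperbolic eigenvalues, nor any induction. Instead it proves (Proposition~\ref{prop:face-rep}) that if $z\in\reInt\stdFace$ has multiplicity $m=\textup{mult}_p(z)$, then
\[
\stdFace = \lspan(\stdFace)\cap\Lambda_+^{(m)}
\quad\text{and}\quad
z\in\interior\bigl(\Lambda_+^{(m)}\bigr),
\]
where $\Lambda_+^{(m)}$ is the $m$th Renegar derivative relaxation. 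The second assertion is exactly the constraint qualification needed to invoke the standard bounded-linear-regularity error bound (Proposition~\ref{prop:error}) for the pair $(\lspan\stdFace,\Lambda_+^{(m)})$, and then $\Lambda_+\subseteq\Lambda_+^{(m)}$ converts $\dist(x,\Lambda_+^{(m)})\le\dist(x,\Lambda_+)$ into the amenability inequality with no further analysis. You gesture at derivative relaxations as one possible route, but you never identify the decisive fact that a multiplicity-$m$ boundary point lies in the \emph{interior} of $\Lambda_+^{(m)}$ (Lemma~\ref{lem:iz}), which is what makes the whole argument reduce to an off-the-shelf error bound. Without that, the ``main obstacle'' you flag---turning a qualitative local-factorization statement into a uniform constant $\kappa$---remains genuinely open in your sketch; there is no obvious hyperbolic analogue of the Schur-complement/$2\times2$-block trick you allude to from the spectrahedral case, and the concavity/Lipschitz properties of the minimal hyperbolic eigenvalue do not by themselves yield the needed bound near a higher-multiplicity face.

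Two smaller points. First, the induction on dimension or degree is unnecessary: once you have the one-step bound for \emph{every} face of \emph{every} hyperbolicity cone (which is what Proposition~\ref{prop:face-rep} delivers directly), amenability follows immediately, since the faces of a face of $\stdCone$ are themselves faces of $\stdCone$ and each gets its own constant---there is nothing to chain. Second, your proposed route to Corollary~\ref{cor:intersection} via the diagonal embedding is flawed as stated: $(\Lambda_1\times\Lambda_2)\cap\Delta$ is generally \emph{not} a face of $\Lambda_1\times\Lambda_2$ (e.g.\ $\Lambda_1=\Lambda_2=\Re_+^2$), so Corollary~\ref{cor:faces} does not apply to it directly. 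The paper instead takes the smallest face $\stdFace_i$ of each $\Lambda_i$ containing the intersection, observes that these share a relative interior point $z$, expresses each $\stdFace_i$ as a hyperbolicity cone $\Lambda_+(q_i,z)$ with the \emph{same} direction $z$, and then multiplies the $q_i$.
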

Prior to this, it was known that hyperbolicity cones are facially exposed~\cite[Theorem~23]{Re06}, but it
was not known whether every hyperbolicity cone is nice, let alone amenable. A simple 
consequence of Theorem~\ref{thm:hyp_am} and~\cite[Proposition~13]{L19} is that every hyperbolicity cone is nice. 

Another basic geometric fact about the faces of spectrahedral cones is that they are, themselves, spectrahedral cones. This follows from the fact that the faces of the positive semidefinite cone are positive semidefinite cones of smaller dimension. En route to the proof of Theorem~\ref{thm:hyp_am}, we show that faces of hyperbolicity cones are, 
themselves, hyperbolicity cones (Corollary~\ref{cor:faces}). Based on this result, we also show that the
intersection of two hyperbolicity cones is, again, a hyperbolicity cone (Corollary~\ref{cor:intersection}). This is
a well-known fact under the additional assumption that the hyperbolicity cones share a relative interior point 
(and hence a direction of hyperbolicity). However, we could not find this result in the literature in 
the more subtle case where the relative interiors of the cones do not intersect.

\paragraph{Outline} 
In Section~\ref{sec:prelim-error}, we recall the
required definitions from convex geometry, and state a standard error bound
result that will be needed for the proof of Theorem~\ref{thm:hyp_am}. 
In Section~\ref{sec:prelim-hyp} we briefly summarize certain key facts and definitions
about hyperbolic polynomials, hyperbolicity cones, and their derivative
relaxations, that will be used throughout.  
In Section~\ref{sec:hyp-faces}, we show that faces of hyperbolicity cones are, themselves, 
hyperbolicity cones, and that the intersection of two hyperbolicity cones is always a 
hyperbolicity cone. We establish these facts via a description of a face of a 
hyperbolicity cone as the (proper) intersection of the span of the face with an appropriate derivative relaxation
of the hyperbolicity cone. We make this description completely explicit, in Section~\ref{sec:sp-f}, by giving
a concrete description of the span of a face of a hyperbolicity cone given a point in its relative interior.
In Section~\ref{sec:pf-hyp_am} we use our representation of faces, together
with the error bound stated in Section~\ref{sec:prelim-error}, to show that every hyperbolicity cone
is amenable. In Section~\ref{sec:discussion} we discuss some further questions related to the 
faces of hyperbolicity cones.

\section{Preliminaries}
\label{sec:prelim}

\subsection{Convex analysis and error bounds}
\label{sec:prelim-error}
Let $C \subseteq \RR^n$ be a closed convex set. In 
what follows we denote the interior, relative interior and the span of $C$ by $\interior C$, $\reInt C$ and $\lspan C$, respectively. We also recall that a 
\emph{face} of $C$ is a closed convex set $\stdFace$ contained in $C$ satisfying the following property: whenever $x,y \in C$ are such that 
$ \alpha x + (1-\alpha) y \in \stdFace $ for some $\alpha \in (0,1)$, we have $x,y \in \stdFace$.
In this case, we write $\stdFace \face C$.

In what follows, we assume that $\RR^n$ is equipped with some norm $\norm{\cdot}$ and denote by $\dist(x,C)$ the distance from 
$x$ to $C$ so that $\dist(x,C) \coloneqq \inf \{\norm{x-y} \mid y \in C \}.$
Finally, we  need the following error bound result.

\begin{proposition}[Linear regularity under a constraint qualification]\label{prop:error} Let $L$ be a linear subspace and $\stdCone$ a closed convex cone with $L\cap \interior \stdCone\neq \emptyset$. Then there exists $\kappa>0$ such that 
	\begin{equation*}%\label{eq:errbweneed}
	\dist(x,L\cap \stdCone) \leq \kappa (\dist(x,L)+\dist(x,\stdCone))\quad \forall x\in \R^n.
	\end{equation*}
\end{proposition}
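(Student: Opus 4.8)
The plan is to give a short, constructive proof that directly exhibits a nearby point of $L\cap\stdCone$. Since all norms on $\R^n$ are equivalent, it suffices to prove the bound for the Euclidean norm (this only changes the final constant), so I will assume $\norm{\cdot}$ is Euclidean and write $P_L$ for the orthogonal projection onto $L$, which is nonexpansive and fixes every point of $L$. Fix once and for all a point $\bar e\in L\cap\interior\stdCone$ and a radius $r>0$ with the closed ball $B(\bar e,r)\subseteq\stdCone$. Because $\stdCone$ is a convex cone, we have $B(\mu\bar e,\mu r)=\mu\, B(\bar e,r)\subseteq\stdCone$ for every $\mu\ge 0$, and $\stdCone+\stdCone\subseteq\stdCone$; these two closure properties are the only structural facts about $\stdCone$ that the argument uses.

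Given $x\in\R^n$, set $\varepsilon\coloneqq\dist(x,L)+\dist(x,\stdCone)$, let $p\coloneqq P_L(x)$, and choose $z\in\stdCone$ realizing $\dist(p,\stdCone)$ (it exists since $\stdCone$ is closed and nonempty). The triangle inequality gives $\dist(p,\stdCone)\le\norm{p-x}+\dist(x,\stdCone)=\varepsilon$, and since $P_L$ is nonexpansive and $P_L(p)=p$, the point $q\coloneqq P_L(z)$ satisfies $\norm{q-p}\le\norm{z-p}=\dist(p,\stdCone)\le\varepsilon$. I then claim that, with $\mu\coloneqq\varepsilon/r$, the point $q+\mu\bar e$ lies in $L\cap\stdCone$: it lies in $L$ since $q,\bar e\in L$; and writing $q+\mu\bar e=z+\bigl(\mu\bar e+(q-z)\bigr)$, the first summand is in $\stdCone$ while the second lies in $B(\mu\bar e,\norm{q-z})\subseteq B(\mu\bar e,\varepsilon)\subseteq B(\mu\bar e,\mu r)\subseteq\stdCone$, so the sum is in $\stdCone+\stdCone\subseteq\stdCone$. (The case $\varepsilon=0$, where $x\in L\cap\stdCone$ already, is trivial.)

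Putting the pieces together, $\dist(x,L\cap\stdCone)\le\norm{x-(q+\mu\bar e)}\le\norm{x-p}+\norm{p-q}+\mu\norm{\bar e}\le\dist(x,L)+\varepsilon+(\norm{\bar e}/r)\varepsilon\le\bigl(2+\norm{\bar e}/r\bigr)\varepsilon$, which is exactly the asserted inequality with $\kappa=2+\norm{\bar e}/r$ (to be rescaled by the norm-equivalence constants when $\norm{\cdot}$ is not Euclidean). I do not expect a genuine obstacle: the only subtle points are the reduction to the Euclidean norm so that an orthogonal, nonexpansive projection onto $L$ is available, and the choice $\mu=\varepsilon/r$ calibrated so that the perturbation $q-z$ is absorbed by the ball $B(\mu\bar e,\mu r)\subseteq\stdCone$. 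An alternative route would be to invoke a standard bounded-linear-regularity result for two convex sets satisfying a Slater-type interior condition and then globalize using positive homogeneity of $x\mapsto\dist(x,\cdot)$ for cones together with a compactness argument on the unit sphere; but the constructive argument above is shorter and yields an explicit $\kappa$.
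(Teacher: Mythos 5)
Your proof is correct, and it takes a genuinely different route from the paper's. The paper's proof is essentially a two-line citation to Bauschke, Borwein and Li: Corollary~3 of \cite{BBL99} yields \emph{bounded} linear regularity (a constant $\kappa_S$ for each bounded set $S$) of a linear subspace and a closed convex cone under the Slater-type condition $L\cap\interior\stdCone\neq\emptyset$, and Theorem~10 of \cite{BBL99} then upgrades the bounded constants to a single global $\kappa$ using positive homogeneity. You instead give a self-contained construction: project $x$ onto $L$ to get $p$, project $p$ onto $\stdCone$ to get $z$, project $z$ back onto $L$ to get $q$, and then push $q$ into $\stdCone$ by adding the calibrated interior direction $\mu\bar e$. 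This exhibits an explicit point $q+\mu\bar e\in L\cap\stdCone$ at distance at most $\bigl(2+\norm{\bar e}/r\bigr)\bigl(\dist(x,L)+\dist(x,\stdCone)\bigr)$ from $x$. What your argument buys is self-containedness and an explicit value of $\kappa$; what the paper's citation-based argument buys is brevity, and the cited machinery applies to arbitrary finite families of closed convex sets, not just a subspace plus a cone. One small point you should spell out: the inclusion $B(\mu\bar e,\norm{q-z})\subseteq B(\mu\bar e,\varepsilon)$ uses $\norm{q-z}\le\varepsilon$, but the estimates you display only establish $\norm{q-p}\le\varepsilon$. The bound you need is immediate --- since $q=P_L(z)$ is the nearest point of $L$ to $z$ and $p\in L$, we get $\norm{q-z}=\dist(z,L)\le\norm{p-z}=\dist(p,\stdCone)\le\varepsilon$ --- so adding this one line closes the minor gap. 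Everything else, including the reduction to the Euclidean norm and the degenerate case $\varepsilon=0$, checks out.
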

\begin{proof} It follows from \cite[Corollary~3]{BBL99} that $L$ and $\stdCone$ are boundedly linearly regular, i.e. for every bounded set $S$ there exists a $\kappa_S>0$ that satisfies the error bound
	\begin{equation}\label{eq:blr}
	\dist(x,L \cap \stdCone)\leq \kappa_S \max \{ \dist(x,L), \dist(x,\stdCone)\}\quad \forall x\in S. 
	\end{equation}
	Moreover, because $L$ and $\stdCone$ are  closed convex cones, by homogeneity one can choose $\kappa_S: = \bar \kappa$ in \eqref{eq:blr} independent of $S$ (see \cite[Theorem~10]{BBL99}). 
\end{proof}

\subsection{Hyperbolic polynomials and hyperbolicity cones}
\label{sec:prelim-hyp}
In this section, we recall some basic facts about hyperbolic polynomials, hyperbolicity cones, and 
their derivative relaxations. Recall that if $p$ is hyperbolic with respect to $e$ then we 
write $\Lambda_+(p,e)$ for the associated hyperbolicity cone. It turns out that the dependence of 
the hyperbolicity cone on the choice of $e$ is quite weak. In fact, if $z\in \textup{int}(\Lambda_+(p,e))$
then $p$ is hyperbolic with respect to $z$, and $\Lambda_+(p,e) = \Lambda_+(p,z)$, a result of 
G\r{a}rding~\cite{gaarding1959inequality}. When $p$ and $e$ are clear from the context, 
we will omit them and write $\Lambda_+$. 

\paragraph{Polynomial inequality description of hyperbolicity cones}
The definition of the closed hyperbolicity cone $\Lambda_+(p,e)$ given in~\eqref{eq:hypcone}
is expressed in terms of the nonnegativity of the zeros of $p(te-x)$. It is also useful 
to work with an alternative description of $\Lambda_+(p,e)$ in terms of polynomial inequalities. 
Let $D_e^kp(x)= \left.\frac{d^k}{dt^k}p(x+te)\right|_{t=0}$ denote the $k$th directional derivative
of $p$ in the direction $e$. 
Expanding $p(x+te)$ in powers of $t$ gives
\[ p(x+te) = p(x) + tD_ep(x) + \frac{t^2}{2{!}}D_e^2p(x) + \cdots + \frac{t^d}{d{!}}D_e^dp(x).\]
Since a real-rooted polynomial has all roots non-positive if and only if it has all coefficients
non-negative, we have 
\begin{equation}
	\label{eq:hyp-ineq}
	\Lambda_+(p,e) = \{x\in \RR^n\;:\; p(x) \geq 0,\; D_ep(x) \geq 0,\; \ldots,\; D_e^{d-1}p(x) \geq 0\}.
\end{equation}
(This description can also be obtained, for instance, by combining Proposition~18 
and Theorem~20 of~\cite{Re06}.) The interior of $\Lambda_+(p,e)$ is then 
\begin{equation}
	\label{eq:hyp-int}
	\textup{int}(\Lambda_+) = \{x\in \RR^n\;:\; p(x) > 0,\; D_ep(x) > 0,\; \ldots,\; D_e^{d-1}p(x) > 0\}.
\end{equation}

\paragraph{Derivative relaxations}
If $p$ is hyperbolic with respect to $e$, 
then so are the directional derivatives $D_e^mp(x)$ (by Rolle's theorem, see also  \cite[Proposition~18]{Re06}). 
Denote the associated hyperbolicity 
cone by $\Lambda_+(D_e^mp,e)$. If $p$ and $e$ are clear from the context we write $\Lambda_+^{(m)}\coloneqq\Lambda_+(D_e^mp,e)$. If $m\leq d-1$ then it follows from~\eqref{eq:hyp-ineq} that 
\begin{equation}
	\label{eq:deriv-ineq}
\Lambda_+^{(m)} = \{x\in \RR^n\;:\; D_e^mp(x) \geq 0,\; D_e^{m+1}p(x) \geq 0, \;\ldots,\; D_e^{d-1}p(x) \geq 0\}.
\end{equation}
For convenience we define $\Lambda_+^{(d)} = \RR^n$. From this description it is clear that 
\begin{equation}
\label{eq:relax}
\Lambda_+ \subseteq \Lambda_+^{(1)} \subseteq \cdots \subseteq 
\Lambda_+^{(d-1)} \subseteq \Lambda_+^{(d)} = \RR^n.
\end{equation}
These are known as \emph{Renegar derivatives} or, in light of~\eqref{eq:relax}, 
\emph{derivative relaxations} of $\Lambda_+$.

The following (standard) technical fact about the coefficients of univariate polynomials 
with nonnegative coefficients will be useful later in our arguments.
\begin{lemma}\label{lem:p_coef}
	Let $p(t) = a_dt^d + a_{d-1}t^{d-1} + \cdots+ a_1t + a_0$ 
	be a univariate polynomial with real zeros, nonnegative coefficients, and $a_d>0$. 
	If $a_k=0$ for some $k<d$, then $a_\ell = 0$ for all $\ell  < k$.
\end{lemma}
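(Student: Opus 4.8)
The plan is to factor $p$ over $\RR$ and read off the coefficients as elementary symmetric functions of the negated roots. First I would note that since every coefficient of $p$ is nonnegative and $a_d>0$, for any $t>0$ we have $p(t)\geq a_dt^d>0$; hence $p$ has no positive real zero, and since by hypothesis all zeros of $p$ are real, every zero is nonpositive. Writing the zeros as $-r_1,\dots,-r_d$ with each $r_i\geq 0$ gives $p(t)=a_d\prod_{i=1}^d(t+r_i)$, and comparing coefficients yields $a_{d-j}=a_d\,e_j(r_1,\dots,r_d)$ for $j=0,1,\dots,d$, where $e_j$ denotes the $j$-th elementary symmetric polynomial.

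Next, suppose $a_k=0$ for some $k<d$, i.e.\ $e_{d-k}(r_1,\dots,r_d)=0$. Since $e_{d-k}$ is a sum of products of $d-k$ of the nonnegative numbers $r_i$ and every term of this sum is nonnegative, each such product must vanish; equivalently, strictly fewer than $d-k$ of the $r_i$ are nonzero, so at least $k+1$ of them equal zero. Then for any $\ell<k$ we have $d-\ell\geq d-k+1$, so every product of $d-\ell$ of the $r_i$ must involve at least one vanishing $r_i$, whence $e_{d-\ell}(r_1,\dots,r_d)=0$ and therefore $a_\ell=a_d\,e_{d-\ell}(r_1,\dots,r_d)=0$, which is the desired conclusion.

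The argument is elementary and I do not anticipate a real obstacle; the only point needing a moment of care is the reduction to the case of nonpositive real roots (ruling out positive real and non-real zeros), which is immediate from the sign of the coefficients together with the realness hypothesis. An essentially equivalent alternative would be to divide out the largest power $t^m$ dividing $p$, obtaining a polynomial with nonzero constant term whose roots are then strictly negative; such a polynomial has all coefficients strictly positive, directly contradicting the presence of a vanishing coefficient of degree strictly between $0$ and the top. I would present the elementary-symmetric-function version, as it also records the exact structure (\emph{at least} $k+1$ roots at the origin) that makes the implication transparent.
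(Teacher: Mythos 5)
Your proof is correct and takes essentially the same route as the paper: both reduce to the observation that all roots are nonpositive (since the nonnegative coefficients and $a_d>0$ force $p(t)>0$ for $t>0$), then read off $a_{d-j}/a_d$ as the $j$-th elementary symmetric function of the (absolute values of the) roots, and conclude by noting that vanishing of every degree-$(d-k)$ product forces vanishing of every longer product. Your version is slightly more explicit (counting at least $k+1$ roots at the origin), but the underlying argument is the same.
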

\begin{proof}
	Because all coefficients are nonnegative and $a _d > 0$, we have $p(\lambda) > 0$ for every positive $\lambda$.
	Therefore, all roots of $p$ are nonpositive. Let $\lambda_1,\ldots,\lambda_d\leq 0$ be the roots of $p$.
	
	We recall that $|a_k/a_d| = |e_{d-k}(\lambda_1,\ldots,\lambda_d)|$ is 
	the absolute value of the elementary symmetric polynomial of degree $d-k$ in the roots of $p$. 
	Since all the roots of $p$ are non-positive, 
	$|e_{d-k}(\lambda_1,\ldots,\lambda_d)|= e_{d-k}(|\lambda_1|,\ldots,|\lambda_d|)$.
	Therefore, if $a_k = 0$, it follows that 
	every product of $d-k$ roots of $p$ must vanish.
	This shows that $a_{\ell} = 0$ for $\ell < k$, because $|a_{\ell}/a_d| = e_{d-\ell}(|\lambda_1|,\ldots,|\lambda_d|)$ 
	is the absolute value of a sum of products of more than $d-k$ roots of $p$.	
\end{proof}

\section{Faces of hyperbolicity cones}
%cones and amenability}
\label{sec:faces-amenability}
In this section, we show that 
%prove the two main contributions of this paper. 
%In Section~\ref{sec:hyp-faces}, we show that 
the faces of hyperbolicity cones can be expressed as the intersection of a linear 
space with a suitable derivative relaxation of the hyperbolicity cone, 
in such a way that the linear space meets the interior of the derivative 
relaxation. This new representation of the faces of hyperbolicity cones is 
the crucial ingredient in our proof that every hyperbolicity cone is amenable, which 
is the focus of Section~\ref{sec:pf-hyp_am}.

\subsection{Facial structure of hyperbolicity cones}
\label{sec:hyp-faces}

Given $x\in \RR^n$, let $\textup{mult}_{p}(x)$ denote the 
multiplicity of $0$ as a zero of $t\mapsto p(te-x)$. 
It turns out that this quantity is independent of the 
choice of $e$~\cite[Proposition 22]{Re06}, so we suppress $e$ from the notation.

The following result tells us that a point of multiplicity $m$ in a hyperbolicity cone 
is in the interior of the $m$th derivative relaxation. 
\begin{lemma}
	\label{lem:iz}
	Let $p$ be hyperbolic with respect to $e$. If $\textup{mult}_{p}(x)=m$ 
	and $x\in \Lambda_+(p,e)$ then 
	\begin{itemize}
		\item $D_e^{k}p(x) = 0$ for $k=0,1,\ldots,m-1$ and
		\item $D_e^kp(x) > 0$ for $k=m,m+1,\ldots, d$,
	\end{itemize}
	and so $x\in \textup{int}(\Lambda_+^{(m)})$. 
\end{lemma}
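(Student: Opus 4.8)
The plan is to reduce everything to the single univariate polynomial $r(t) \coloneqq p(x+te)$ and read off the claims from its coefficients. Expanding as in Section~\ref{sec:prelim-hyp},
\[
 r(t) = \sum_{k=0}^{d} \frac{t^k}{k!}\, D_e^k p(x),
\]
so the $k$th Taylor coefficient of $r$ at $0$ is $\tfrac{1}{k!}D_e^k p(x)$. Since $p$ is homogeneous of degree $d$, the top coefficient equals $\tfrac{1}{d!}D_e^d p(x) = p(e) > 0$, so $r$ has degree exactly $d$ and $D_e^d p(x) = d!\,p(e) > 0$. The first thing to check is the bridge between the intrinsically defined $\textup{mult}_p(x)$ and the vanishing pattern of the $D_e^k p(x)$: by homogeneity $p(te-x) = (-1)^d p(x-te) = (-1)^d r(-t)$, so $0$ is a zero of $t\mapsto p(te-x)$ of exactly the same order as it is of $r$, namely $m$ by hypothesis. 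Hence $D_e^k p(x) = 0$ for $k = 0,1,\ldots,m-1$ (the first bullet) and $D_e^m p(x) \neq 0$.

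Next I would observe that $r$ has only real zeros: applying the definition of hyperbolicity of $p$ with respect to $e$ at the point $-x$ shows $t\mapsto p(x+te)$ has only real zeros. Since $x\in \Lambda_+(p,e)$, the inequality description \eqref{eq:hyp-ineq} gives $D_e^k p(x)\ge 0$ for $k=0,\ldots,d-1$, and we have just seen $D_e^d p(x) > 0$. Thus $r$ is a univariate polynomial with real zeros, nonnegative coefficients, and positive leading coefficient, so Lemma~\ref{lem:p_coef} applies to it. In particular $D_e^m p(x)\ge 0$ together with $D_e^m p(x)\neq 0$ yields $D_e^m p(x) > 0$.

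Now the key step: suppose for contradiction $D_e^k p(x) = 0$ for some $k$ with $m < k < d$. Then the $t^k$-coefficient of $r$ vanishes with $k<d$, so by Lemma~\ref{lem:p_coef} every lower coefficient of $r$ vanishes, in particular the $t^m$-coefficient, contradicting $D_e^m p(x) > 0$. Therefore $D_e^k p(x) > 0$ for every $k = m, m+1, \ldots, d$, which is the second bullet. Finally, to get $x\in \textup{int}(\Lambda_+^{(m)})$, note that $D_e^m p$ is hyperbolic with respect to $e$ of degree $d-m$ and $D_e^j(D_e^m p) = D_e^{m+j}p$, so \eqref{eq:hyp-int} applied to $D_e^m p$ gives
\[
 \textup{int}(\Lambda_+^{(m)}) = \{\, y\in\RR^n \;:\; D_e^m p(y) > 0,\; D_e^{m+1}p(y) > 0,\; \ldots,\; D_e^{d-1}p(y) > 0 \,\};
\]
the already-established strict inequalities $D_e^k p(x) > 0$ for $k = m,\ldots,d-1$ place $x$ in this set.

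There is no deep obstacle here; the content is bookkeeping with univariate coefficients. The one place requiring care is the first paragraph's identification of $\textup{mult}_p(x)$ with the order of vanishing of $r$ at $0$ (handled by homogeneity and the harmless substitution $t\mapsto -t$). The only genuine idea is that Lemma~\ref{lem:p_coef} upgrades the weak information ``all coefficients $\ge 0$, and the $m$th and $d$th are positive'' to ``all coefficients of index $\ge m$ are positive,'' and that lemma is already available.
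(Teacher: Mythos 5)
Your proof is correct and follows essentially the same route as the paper's: relate $\textup{mult}_p(x)$ to the order of vanishing of $t\mapsto p(x+te)$ at $0$ via the substitution $t\mapsto -t$, use the inequality description of $\Lambda_+$ to get nonnegativity of the coefficients, apply Lemma~\ref{lem:p_coef} to upgrade to strict positivity for indices $m,\ldots,d-1$, and invoke the interior description of $\Lambda_+^{(m)}$. The only (minor) difference is presentational: you explicitly verify that $r(t)=p(x+te)$ has only real zeros before invoking Lemma~\ref{lem:p_coef}, and you phrase the lemma's use as a contradiction rather than as a direct contrapositive, both harmless.
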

\begin{proof}
	Since $\textup{mult}_{p}(x) = m$, we know that $t\mapsto p(te-x)$ 
	vanishes to exactly order $m$ at $t=0$. Since $p(te-x)= (-1)^dp(x+(-t)e)$, 
	it also holds that $t\mapsto p(x+te)$ vanishes to exactly order $m$ at $t=0$.
	Hence $D_e^\ell p(x) = 0$ for $\ell=0,1,\ldots,m-1$, and 
	and $D_e^mp(x)\neq 0$ (otherwise $\textup{mult}_{p}(x) > m$). As such,
	\[ p(x+te) = \frac{t^m}{m{!}}D_e^mp(x) + \frac{t^{m+1}}{(m+1){!}}D_e^{m+1}p(x) + \cdots + \frac{t^d}{d{!}}D_e^dp(x).\]
	Since $x\in \Lambda_+$, we know that $D_e^kp(x) \geq 0$ for all $k$ and 
	so, since $D_e^mp(x)\neq 0$, it must be the case that $D_e^mp(x) > 0$.
	%otherwise 
	%$\textup{mult}_{p}(x) > m$. 
	By Lemma~\ref{lem:p_coef}, it follows that 
	$D_e^kp(x) > 0$ for $k=m,m+1,\ldots, d-1$.
	Moreover $D_e^dp(x) = d{!}p(e) > 0$, since $p$ is hyperbolic with respect to $e$. 
	Since $D_e^kp(x)>0$ for $k=m,m+1,\ldots,d-1$, it follows from~\eqref{eq:deriv-ineq} and~\eqref{eq:hyp-int}
	that $x\in \textup{int}(\Lambda_+^{(m)})$.
\end{proof}
We also require the following key fact, which is part of~\cite[Theorem 26]{Re06}, 
about multiplicity and faces of hyperbolicity cones.
\begin{lemma}
	\label{lem:mult-constant}
	Suppose that $p$ is hyperbolic with respect to $e$. 
	If $\stdFace$ is a face of $\Lambda_+(p,e)$ and $x,z\in \reInt(\stdFace)$ then 
	$\textup{mult}_p(x) = \textup{mult}_p(z)$. 
\end{lemma}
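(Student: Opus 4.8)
The plan is to show that the multiplicity $\textup{mult}_p$ is constant on the relative interior of any face $\stdFace$ of $\Lambda_+(p,e)$. First I would fix $x,z \in \reInt(\stdFace)$ and set $m = \textup{mult}_p(x)$ and $m' = \textup{mult}_p(z)$; by symmetry it suffices to show $m' \le m$. The key point is that a point of multiplicity $m$ sits in the interior of the $m$th derivative relaxation: by Lemma~\ref{lem:iz}, $x \in \interior(\Lambda_+^{(m)})$, and moreover $D_e^k p(x) = 0$ precisely for $k = 0,1,\ldots,m-1$ while $D_e^k p(x) > 0$ for $k \ge m$. Since the functions $D_e^k p$ are polynomials, hence continuous, the conditions $D_e^k p(\,\cdot\,) > 0$ for $k = m,\ldots,d$ cut out an open set containing $x$, so they hold on a neighborhood of $x$ in $\RR^n$.

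Next I would use the defining property of the relative interior: since $x,z \in \reInt(\stdFace)$, the point $x$ lies strictly between $z$ and some other point of $\stdFace$ along the line through $x$ and $z$; concretely there is $\epsilon > 0$ such that $y_\lambda := x + \lambda(x - z) \in \stdFace \subseteq \Lambda_+$ for all $\lambda \in [0,\epsilon]$ (and also $y_{-1} = z \in \stdFace$). For $\lambda$ small and positive, $y_\lambda$ lies in the neighborhood of $x$ on which $D_e^k p > 0$ for all $k \ge m$, and consequently $D_e^k p(y_\lambda) > 0$ for $k \ge m$; on the other hand $y_\lambda \in \Lambda_+$ forces $D_e^k p(y_\lambda) \ge 0$ for all $k$ by~\eqref{eq:hyp-ineq}. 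The substantive step is then to argue that $D_e^k p(y_\lambda) = 0$ for $k = 0,\ldots,m-1$ for these small $\lambda$ as well — i.e., that the multiplicity does not drop as we move from $x$ a little way along the segment inside the face.

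To see this, recall that a face is an \emph{extreme} subset: if $y_\lambda \in \stdFace$ had $\textup{mult}_p(y_\lambda) < m$, I would derive a contradiction with $x \in \reInt(\stdFace)$ as follows. The set $\{x' \in \Lambda_+ : D_e^k p(x') = 0,\ k=0,\ldots,m-1\}$ contains $x$ in its relative interior direction but is cut out by polynomial equations; the cleaner route is to observe that $x = \tfrac{1}{1+\lambda} y_\lambda + \tfrac{\lambda}{1+\lambda} z$ is a proper convex combination of $y_\lambda, z \in \stdFace$, so any face containing $x$ contains $y_\lambda$ and $z$ — but that only reproves membership. Instead, the efficient argument is: since the minimal face of $\Lambda_+$ containing $x$ equals the minimal face containing $z$ (both equal the minimal face containing $\reInt(\stdFace)$, which is $\stdFace$ itself when $\stdFace$ has nonempty relative interior relative to its span), and since $\textup{mult}_p$ of a point equals the number of vanishing derivatives $D_e^k p$, which is constant on the relative interior of the minimal face by the same open-neighborhood argument applied symmetrically at $z$ with value $m'$. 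Running the neighborhood argument at both $x$ and $z$ and using that the segment $(z,x)$ meets both neighborhoods gives $m = m'$.

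The main obstacle I anticipate is making the "multiplicity does not drop along an interior segment" step fully rigorous without circularity: the slick statement is that $\{x' : D_e^k p(x') = 0 \text{ for } k < m\} \cap \Lambda_+$ is exactly $\Lambda_+^{(m)}$-like in structure near $x$, and one must connect "$x \in \reInt(\stdFace)$" to "all vanishing-derivative conditions satisfied by nearby face points are satisfied by $x$." I expect this is handled in Renegar's Theorem~26 (which the excerpt cites) via the observation that $\stdFace \subseteq \{x' : D_e^k p(x') = 0,\ k=0,\ldots,m-1\}$ whenever $\stdFace$ contains a relative-interior point of multiplicity $m$, together with continuity giving the reverse positivity $D_e^k p > 0$, $k \ge m$, on $\reInt(\stdFace)$; combining these shows $\textup{mult}_p \equiv m$ on $\reInt(\stdFace)$, which is exactly the claim.
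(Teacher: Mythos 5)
The paper does not prove this lemma; it cites it verbatim as part of Renegar's Theorem~26 and moves on, so there is no internal proof to compare against. Your proposal, however, has a genuine gap at exactly the step you flag yourself. The continuity/open-set argument — that $D_e^kp>0$ for $k\ge m$ on a neighbourhood of $x$ — only shows that $\textup{mult}_p$ is \emph{upper semicontinuous}: the multiplicity can only drop under a small perturbation. This gives no control away from $x$ and in particular does not prevent $\textup{mult}_p(z)$ from being strictly less than $m$. Your closing ``efficient argument'' asserts that $\textup{mult}_p$ ``is constant on the relative interior of the minimal face by the same open-neighbourhood argument,'' which is circular (that constancy is precisely the lemma), and the suggestion to ``run the neighbourhood argument at both $x$ and $z$ and use that the segment meets both neighbourhoods'' cannot close it either: the two neighbourhoods need not overlap, and even where they do you would only get $\textup{mult}_p\le\min(m,m')$ there, not $m=m'$.

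What is missing is a one-sided lower bound: if $x\in\reInt(\stdFace)$ and $y\in\stdFace$ then $\textup{mult}_p(y)\ge\textup{mult}_p(x)$, equivalently $D_e^kp$ vanishes on all of $\stdFace$ (hence on $\lspan\stdFace$) for every $k<m$. A clean way to get this is via G\r{a}rding concavity: for $k<d$, $D_e^kp$ is hyperbolic of degree $d-k$ with hyperbolicity cone containing $\Lambda_+$, so $(D_e^kp)^{1/(d-k)}$ is concave and nonnegative on $\Lambda_+$. Since $x\in\reInt(\stdFace)$, for any $y\in\stdFace$ one can write $x=\alpha y+(1-\alpha)w$ with $w\in\stdFace$ and $\alpha\in(0,1)$; concavity plus nonnegativity then forces $(D_e^kp)^{1/(d-k)}(y)=0$ whenever $(D_e^kp)^{1/(d-k)}(x)=0$. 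Applying this for $k=0,\ldots,m-1$ (where $D_e^kp(x)=0$ by Lemma~\ref{lem:iz}) gives $\textup{mult}_p(y)\ge m$, and symmetry between $x$ and $z$ finishes the proof. This is the ingredient your proposal identifies as the crux but does not actually supply.
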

We now establish the main result of this subsection, which gives a new description of the faces of 
hyperbolicity cones. 
\begin{proposition}[Faces and derivative relaxations]
	\label{prop:face-rep}
	Suppose that $p$ is hyperbolic with respect to $e$ and $\stdFace$ is 
	a face of $\Lambda_+(p,e)$. Let $z\in \reInt(\stdFace)$ and let $m = \textup{mult}_p(z)$. 
	Then $z\in \textup{int}(\Lambda_+^{(m)})$ and  
	\[ \stdFace = \lspan(\stdFace) \cap \Lambda_+^{(m)}.\]
\end{proposition}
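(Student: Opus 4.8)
The plan is to establish the set equality $\stdFace = \lspan(\stdFace) \cap \Lambda_+^{(m)}$ by proving the two inclusions separately, using Lemma~\ref{lem:iz} to place $z$ in $\interior(\Lambda_+^{(m)})$, and the Renegar fact on constancy of multiplicity on relative interiors of faces. The inclusion $\stdFace \subseteq \lspan(\stdFace) \cap \Lambda_+^{(m)}$ is the easy direction: trivially $\stdFace \subseteq \lspan(\stdFace)$, and $\stdFace \subseteq \Lambda_+ \subseteq \Lambda_+^{(m)}$ by the chain~\eqref{eq:relax}. The statement $z \in \interior(\Lambda_+^{(m)})$ is immediate from Lemma~\ref{lem:iz} applied to $z$, since $z \in \Lambda_+$ and $\textup{mult}_p(z) = m$.

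For the reverse inclusion $\lspan(\stdFace) \cap \Lambda_+^{(m)} \subseteq \stdFace$, I would argue as follows. Take any $y \in \lspan(\stdFace) \cap \Lambda_+^{(m)}$. Since $z \in \reInt(\stdFace)$ and $y \in \lspan(\stdFace) = \aff(\stdFace)$ (the span and affine hull of a face of a cone coincide, as the face contains $0$), for sufficiently small $\varepsilon > 0$ the point $w \coloneqq z + \varepsilon(z - y) = (1+\varepsilon)z - \varepsilon y$ lies in $\stdFace$, because moving from $z$ in the direction $z-y$ within the affine hull stays in the relative interior for a short time. Now observe that $z$ is a proper convex combination of $w$ and $y$: indeed $z = \frac{1}{1+\varepsilon} w + \frac{\varepsilon}{1+\varepsilon} y$. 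To invoke the face-definition and conclude $y \in \stdFace$, I need both $w \in \Lambda_+$ and $y \in \Lambda_+$. We have $w \in \stdFace \subseteq \Lambda_+$ already. The remaining task is to show $y \in \Lambda_+$, i.e.\ to upgrade membership in $\Lambda_+^{(m)}$ to membership in $\Lambda_+$ for points on the segment; equivalently, I must verify $D_e^k p(y) \geq 0$ for $k = 0, 1, \ldots, m-1$ as well (membership in $\Lambda_+^{(m)}$ already gives the inequalities for $k \geq m$, by~\eqref{eq:deriv-ineq}).

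The main obstacle is therefore exactly this last point: why does a point $y \in \lspan(\stdFace) \cap \Lambda_+^{(m)}$ automatically satisfy $D_e^k p(y) \geq 0$ for $k < m$? I expect the argument to run as follows. Fix $k < m$ and consider the polynomial restricted to the line through $z$ and $y$: the univariate polynomial $q(s) \coloneqq D_e^k p\bigl((1-s)z + s y\bigr)$. At $s = 0$ we have $q(0) = D_e^k p(z) = 0$ by Lemma~\ref{lem:iz} (since $k < m$). For small negative $s$, the point $(1-s)z + sy$ moves from $z$ in the direction $z - y$, which stays inside $\stdFace \subseteq \Lambda_+$, so $q(s) \geq 0$ there; in fact, since $z \in \reInt(\stdFace)$ and $\textup{mult}_p$ is constant equal to $m$ on $\reInt(\stdFace)$, all such nearby points also have $D_e^k p = 0$, forcing $q(s) = 0$ on an interval, hence $q \equiv 0$ as a polynomial, and in particular $D_e^k p(y) = q(1) = 0 \geq 0$. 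This handles all $k < m$ at once. Combined with the inequalities for $k \geq m$ coming from $y \in \Lambda_+^{(m)}$, the description~\eqref{eq:hyp-ineq} yields $y \in \Lambda_+$. Then applying the definition of a face to the combination $z = \frac{1}{1+\varepsilon} w + \frac{\varepsilon}{1+\varepsilon} y$ with $w, y \in \Lambda_+$ and $z \in \stdFace$ gives $y \in \stdFace$, completing the proof. A minor point to handle carefully is the degenerate case $y = z$, which is trivial, and ensuring the direction $z - y$ lands in $\lspan(\stdFace)$ so that the relative-interior perturbation argument is legitimate.
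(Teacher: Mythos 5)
Your proof is correct and rests on the same key ingredients as the paper's: Lemma~\ref{lem:iz} to place $z$ in $\interior(\Lambda_+^{(m)})$, Renegar's constancy of multiplicity on $\reInt(\stdFace)$, and the observation that $D_e^k p$ for $k<m$ vanishes identically along $\lspan(\stdFace)$ because it vanishes on the relatively open set $\reInt(\stdFace)$. The paper's write-up is slightly more direct --- it invokes the standard identity $\stdFace = \lspan(\stdFace) \cap \Lambda_+$ and simply rewrites the polynomial-inequality description of $\stdFace$ once the first $m$ inequalities are seen to be identically zero on $\lspan(\stdFace)$, rather than deriving $y\in\stdFace$ via your convex-combination/facial-extremality detour --- but the substance is the same.
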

\begin{proof}
	Since $\textup{mult}_p(z) = m$ and $z\in \reInt(\stdFace)\subseteq \Lambda_+$, 
	it follows from Lemma~\ref{lem:iz} that $p(z) = D_ep(z) = \cdots = D_e^{m-1}p(z) = 0$
	and that $z\in \textup{int}(\Lambda_+^{(m)})$. 
	
	%Equivalently, 
	%\[ \stdFace = \{x\in \lspan(\stdFace)\;:\; p(x) \geq 0,\; D_ep(x) \geq 0,\;\ldots,\; D_e^{d-1}p(x) \geq 0\}.\]
	Since $z\in \reInt(\stdFace)$ has multiplicity $m$, it follows from Lemma~\ref{lem:mult-constant} that 
	every point in the relative interior of $\stdFace$ has multiplicity $m$. As such
	$p(x) = D_ep(x) = \cdots = D^{m-1}_ep(x) = 0$ for all $x\in \reInt(\stdFace)$. 
	Since $\reInt(\stdFace)$ is full-dimensional in $\lspan(\stdFace)$, it follows that
	$p(x) = D_ep(x) = \cdots = D^{m-1}_ep(x) = 0$ for all $x\in \lspan(\stdFace)$. 
	
	% added for submission.
	Since $\stdFace$ is a face of $\Lambda_+$, we have  that 
	$\stdFace = \lspan(\stdFace) \cap \Lambda_+$. 
	It follows from~\eqref{eq:relax} that 
	$\stdFace = \lspan(\stdFace) \cap \Lambda_+ \subseteq \lspan(\stdFace)\cap \Lambda_+^{(m)}$. 
	For the reverse inclusion, suppose that $x\in \lspan(\stdFace)\cap \Lambda_+^{(m)}$. Then 
	$p(x) = D_e^1p(x) = \cdots = D_e^{m-1}p(x) = 0$ (since $x\in \lspan(\stdFace)$) and 
	$D_e^kp(x)\geq 0$ for $k=m,m+1,\dots,d-1$ (since $x\in \Lambda_+^{(m)}$). 
	Hence  $x\in \Lambda_+$ (from~\eqref{eq:hyp-ineq})
	and so $\lspan(\stdFace)\cap \Lambda_+^{(m)} \subseteq \lspan(\stdFace)\cap \Lambda_+ = \stdFace$.
	% removed
	%As such, 
	%\[ \stdFace = \{x\in \lspan(\stdFace)\;:\; D_e^{m}p(x) \geq 0,\; \ldots,\; D_e^{d-1}p(x) \geq 0\}= 
	%\textup{span}(\stdFace) \cap \Lambda_+^{(m)}.\]
\end{proof}
This result can be rephrased as saying that 
faces of hyperbolicity cones are, themselves, hyperbolicity cones. To see this, first we clarify the notation
we will use. If $L\subseteq \Re^n$ is a subspace and $p$ is hyperbolic with respect to $e\in L$, 
then the restriction of $p$ to $L$, denoted $\left.p\right|_{L}$,
is also hyperbolic with respect to $e$ when thought of as a polynomial on $L$. 
The corresponding hyperbolicity cone, $\Lambda_+(\left.p\right|_L,e)$, is linearly isomorphic to 
$\Lambda_+(p,e) \cap L$.
\begin{corollary}\label{cor:faces}
	Suppose that $p$ is hyperbolic with respect to $e$ and $\stdFace$ is a face of $\Lambda_+(p,e)$. If $z\in \reInt(\stdFace)$ 
	and $m=\textup{mult}_p(z)$ then $q = \left.D_e^{m}p\right|_{\lspan(\stdFace)}$ is hyperbolic with respect to $z$ and 
	$\stdFace = \Lambda_+(q,z)$.
\end{corollary}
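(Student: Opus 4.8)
The plan is to read the corollary off from Proposition~\ref{prop:face-rep}, together with the restriction remark stated just before the corollary and G\r{a}rding's theorem on the direction-independence of hyperbolicity cones. Throughout, write $L \coloneqq \lspan(\stdFace)$.

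First I would apply Proposition~\ref{prop:face-rep} to $\stdFace$, $z$, and $m = \textup{mult}_p(z)$, which gives $z\in \interior(\Lambda_+^{(m)})$ and $\stdFace = L \cap \Lambda_+^{(m)}$. Hence it suffices to show that $q = \left.D_e^m p\right|_L$ is hyperbolic with respect to $z$ and that $\Lambda_+(q,z) = L \cap \Lambda_+^{(m)}$.

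For the hyperbolicity claim: by the Rolle-type fact recalled in Section~\ref{sec:prelim-hyp} (\cite[Proposition~18]{Re06}), $D_e^m p$ is hyperbolic with respect to $e$ and $\Lambda_+(D_e^m p,e) = \Lambda_+^{(m)}$. Since $z \in \interior(\Lambda_+^{(m)})$, G\r{a}rding's theorem~\cite{gaarding1959inequality} implies $D_e^m p$ is also hyperbolic with respect to $z$ and $\Lambda_+(D_e^m p,z) = \Lambda_+(D_e^m p,e) = \Lambda_+^{(m)}$. The point of passing from $e$ to $z$ is that, unlike $e$, the point $z$ lies in $L$; now the restriction remark, applied with the polynomial $D_e^m p$, the direction $z\in L$, and the subspace $L$, shows that $q = \left.D_e^m p\right|_L$ is hyperbolic with respect to $z$. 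For the cone identity, note that since $L$ is a subspace containing $z$, for every $x \in L$ we have $tz - x \in L$ for all $t$, so the univariate polynomial $t\mapsto q(tz-x)$ is literally $t\mapsto D_e^m p(tz-x)$; hence, straight from the definition~\eqref{eq:hypcone}, $\Lambda_+(q,z) = L \cap \Lambda_+(D_e^m p,z) = L \cap \Lambda_+^{(m)} = \stdFace$.

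I do not expect a genuine obstacle here, since all the mathematical content is already in Proposition~\ref{prop:face-rep}. The only points requiring care are the bookkeeping around the restriction remark --- namely that its phrase ``$\Lambda_+(\left.p\right|_L,e)$ is linearly isomorphic to $\Lambda_+(p,e)\cap L$'' is, under the natural identification of $L$ with a subset of $\Re^n$, an honest equality --- and the observation that the correct direction of hyperbolicity to use inside $L$ is $z$, not $e$, which is exactly where G\r{a}rding's theorem enters.
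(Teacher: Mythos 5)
Your proposal is correct and is exactly the intended argument: the paper leaves the corollary as an immediate consequence of Proposition~\ref{prop:face-rep} together with the restriction remark, and you have correctly filled in the one nontrivial step that the paper leaves implicit, namely using G\r{a}rding's theorem to replace $e$ (which need not lie in $\lspan(\stdFace)$) by $z \in \reInt(\stdFace) \subseteq \lspan(\stdFace) \cap \interior(\Lambda_+^{(m)})$ as the direction of hyperbolicity before restricting to $\lspan(\stdFace)$.
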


%%
% minor edits in this paragraph for submission
We can use Corollary~\ref{cor:faces} to understand the intersection of two hyperbolicity cones. Let 
$\Lambda_+(p_1,e_1)$ and $\Lambda_+(p_2,e_2)$ be two 
hyperbolicity cones. If their relative interiors intersect, then $\Lambda_+(p_1,e_1)\cap  \Lambda_+(p_2,e_2)$ is also a hyperbolicity cone and the corresponding hyperbolic polynomial is $p_1p_2$ restricted to the span of $\Lambda_+(p_1,e_1)\cap  \Lambda_+(p_2,e_2)$. However, when $(\reInt\Lambda_+(p_1,e_1))\cap  (\reInt\Lambda_+(p_2,e_2)) = \emptyset$, the situation is less obvious.
As a consequence of Corollary~\ref{cor:faces}, we can show that the intersection of two hyperbolicity cones is a hyperbolicity cone. 
\begin{corollary}\label{cor:intersection}
Suppose that $p_1$ and $p_2$ are hyperbolic polynomials with respect to $e_1$ and $e_2$, respectively. Then, $\Lambda_+(p_1,e_1)\cap  \Lambda_+(p_2,e_2)$ is a hyperbolicity cone.
\end{corollary}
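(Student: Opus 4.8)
The plan is to reduce the general statement to the situation recalled immediately before the corollary, where the two hyperbolicity cones share a relative interior point. Write $C\coloneqq\Lambda_+(p_1,e_1)\cap\Lambda_+(p_2,e_2)$, a nonempty closed convex cone (it contains $0$). For $i=1,2$, let $\stdFace_i$ be the minimal face of $\Lambda_+(p_i,e_i)$ containing $C$ (the intersection of all faces of $\Lambda_+(p_i,e_i)$ that contain $C$). The first, easy observation is that $C=\stdFace_1\cap\stdFace_2$: indeed $C\subseteq\stdFace_1$ and $C\subseteq\stdFace_2$ by construction, while $\stdFace_1\cap\stdFace_2\subseteq\Lambda_+(p_1,e_1)\cap\Lambda_+(p_2,e_2)=C$ because each $\stdFace_i\subseteq\Lambda_+(p_i,e_i)$. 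By Corollary~\ref{cor:faces}, each $\stdFace_i$ is itself a hyperbolicity cone; the point of replacing the $\Lambda_+(p_i,e_i)$ by the smaller cones $\stdFace_i$ is that the latter acquire a common relative interior point, as explained next.

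Recall the standard fact that if $F$ is the minimal face of a convex set $K$ containing a convex subset $S$, then $\reInt(S)\subseteq\reInt(F)$ (every point of $\reInt(S)$ has the same minimal face of $K$, namely $F$, and lies in its relative interior). Applying this with $K=\Lambda_+(p_i,e_i)$ and $S=C$ gives $\emptyset\neq\reInt(C)\subseteq\reInt(\stdFace_1)\cap\reInt(\stdFace_2)$. Fix $z\in\reInt(C)$ and put $m_i\coloneqq\textup{mult}_{p_i}(z)$. Since $z\in\reInt(\stdFace_i)$, Proposition~\ref{prop:face-rep} applied to the face $\stdFace_i$ of $\Lambda_+(p_i,e_i)$ yields
\[ \stdFace_i=\lspan(\stdFace_i)\cap\Lambda_+(D_{e_i}^{m_i}p_i,e_i)\quad\text{and}\quad z\in\textup{int}\bigl(\Lambda_+(D_{e_i}^{m_i}p_i,e_i)\bigr). \]
Intersecting the two descriptions and using $C=\stdFace_1\cap\stdFace_2$ gives
\[ C=N\cap\Lambda_+(D_{e_1}^{m_1}p_1,e_1)\cap\Lambda_+(D_{e_2}^{m_2}p_2,e_2),\qquad N\coloneqq\lspan(\stdFace_1)\cap\lspan(\stdFace_2), \]
with $z\in N$ and $z$ in the interior (in $\Re^n$) of each of the two derivative relaxations.

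It then remains to recognise the right-hand side as a hyperbolicity cone. The two derivative relaxations $\Lambda_+(D_{e_i}^{m_i}p_i,e_i)$ are full-dimensional subsets of $\Re^n$ (each contains $z$ in its interior), so their relative interiors meet, and by the ``easy case'' recalled before the corollary their intersection is the hyperbolicity cone $\Lambda_+(P,z)$, where $P\coloneqq(D_{e_1}^{m_1}p_1)(D_{e_2}^{m_2}p_2)$; equivalently, one invokes G\r{a}rding's theorem to see that each factor is hyperbolic with respect to $z$, and that a product of polynomials hyperbolic in a common direction is hyperbolic in that direction with hyperbolicity cone the intersection of the two. Thus $C=N\cap\Lambda_+(P,z)$ with $z\in N$, and restricting $P$ to the subspace $N$ exhibits $C$ as the hyperbolicity cone $\Lambda_+(\left.P\right|_N,z)$ (up to the linear isomorphism noted in the discussion preceding Corollary~\ref{cor:faces}), which completes the proof.

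The argument involves no hard computation; the only genuinely delicate point I expect is the bookkeeping of ambient spaces. One should keep all of the work inside $\Re^n$ — where the derivative relaxations are full-dimensional and G\r{a}rding applies without fuss — and intersect with the subspace $N$ only at the very end; trying instead to transport the faces $\stdFace_i\subseteq\lspan(\stdFace_i)$ into a common subspace first is awkward, since such an intersection need not be full-dimensional in that subspace. The fully degenerate case $C=\{0\}$, in which the relevant derivative relaxations are positive constants and $\Lambda_+(P,z)=\Re^n$, is consistent with the above and can also be checked by hand. The conceptual crux is the realisation that passing to the minimal faces $\stdFace_1,\stdFace_2$ simultaneously preserves the intersection and manufactures the common relative interior point needed to invoke the easy case; everything afterwards is assembling Proposition~\ref{prop:face-rep}, G\r{a}rding's theorem, and the behaviour of hyperbolicity under products.
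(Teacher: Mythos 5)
Your proposal is correct and is essentially the paper's own proof: pass to the minimal faces $\stdFace_1,\stdFace_2$ to manufacture a common relative interior point $z$, use Proposition~\ref{prop:face-rep} (equivalently Corollary~\ref{cor:faces}) to express each $\stdFace_i$ via a derivative relaxation with $z$ in its interior, and then multiply the corresponding polynomials and restrict to the common subspace. The only cosmetic difference is that you keep both derivative relaxations ambient in $\Re^n$ and intersect with $N=\lspan\stdFace_1\cap\lspan\stdFace_2$ at the very end, while the paper first restricts each $D_{e_i}^{m_i}p_i$ to $\lspan\stdFace_i$ via Corollary~\ref{cor:faces} and then forms the product on $L=\lspan C$; these are the same subspace and the same argument.
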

\begin{proof}
Let $\stdCone = \Lambda_+(p_1,e_1)\cap  \Lambda_+(p_2,e_2)$.
It is a well-known fact from convex analysis that if $\stdFace _i$ is the smallest face of 
$\Lambda_+(p_i,e_i)$ containing $\stdCone$, 
then we have 
\begin{equation*}
	\reInt \stdCone \subseteq \reInt\stdFace_i \qquad \textup{for $i =1,2$,}
\end{equation*}
	and $\stdCone = \stdFace_1 \cap \stdFace_2$. (See, for example, \cite[Proposition~2.2]{LRS20}.)
In particular, there exists $z \in \reInt \stdCone$ such that  $z \in (\reInt\stdFace_1)\cap(\reInt \stdFace_2)$.

For $i = 1,2$, let $m_i \coloneqq \textup{mult}_{p_i}(z)$ and $q_i \coloneqq \left.D_{e_i}^{m_i}p_i\right|_{\lspan(\stdFace_i)}$. By Corollary~\ref{cor:faces}, 
$q_i$ is hyperbolic with respect to $z$ and
$\stdFace_i = \Lambda_+(q_i,z)$.
If $L = \lspan \stdCone$, then 
$\left.(q_1q_2)\right|_{L}$ is hyperbolic with respect to $z$ and the corresponding hyperbolicity cone is $\stdCone = \stdFace_1\cap \stdFace_2$.
\end{proof}

\subsection{Explicit description of the span of a face}
\label{sec:sp-f}
	In Section~\ref{sec:hyp-faces} we gave a description of a face of a hyperbolicity
	cone as the intersection of the span of the face with an appropriate
	derivative relaxation. While Proposition~\ref{prop:face-rep}
	will be sufficient to establish that hyperbolicity cones are amenable, 
	the expression therein is implicit in the sense that in describing a face, 
	it refers to the span of that 
	face. In this section we show how to give an explicit description of the span 
	of a face of a hyperbolicity cone by using results from~\cite{Re06}. The description
	only uses the hyperbolic polynomial, a direction of hyperbolicity, and a point 
	in the relative interior of the face.

	Suppose that $q$ is hyperbolic with respect to $e$ and $z\in \Lambda_+(q,e)$ 
	has $\textup{mult}_q(z) = 1$. This means that $z$ is a smooth point of the boundary of the 
	hyperbolicity cone. Following~\cite{Re06} let
	\begin{equation}\label{eq:tan} 
	T_z\Lambda_+(q,e) = \{x\;:\; D_xq(z) = 0\}
	\end{equation}
	denote the tangent space to the hyperbolicity cone at the point $z$. 
	
	Our explicit description of the span of a face is closely related to
	the following result of Renegar.
	\begin{theorem}[{Renegar~\cite[Proposition 9 and Theorem 10]{Re06}}]
		\label{thm:renegar-hessian}
		Suppose that $z\in \Lambda_+(q,e)$ and $\textup{mult}_q(z)=1$ and $x\in T_z\Lambda_+(q,e)$. Then $D^2_xq(z) \leq 0$. Moreover, exactly one of the following holds:
		\begin{itemize}
			\item $q(z+tx) = 0$ for all $t$ and there exists $\epsilon>0$ such that 
				$z+tx \in \Lambda_+(q,e)$ and $\textup{mult}_q(z+tx)=1$ whenever $t\in (-\epsilon,\epsilon)$;
			\item $D_x^2q(z) < 0$.
		\end{itemize}
	\end{theorem}
%\todo[inline]{Add a Lemma about exposing hyperplanes in order to explain Equation~\ref{eq:renegar_face}. Then, we can remove the comment about ``inspecting Renegar's proof''.
%Do something to appease the referee, e.g., add a bit more explanation in some other place where Renegar's result is cited.}
	At this point, it is convenient to introduce one more piece of notation. 
	%Let $H_q(z)$ denote the Hessian of $q$ at $z$, which we think of as 
	%the self-adjoint linear
	%map associated with the quadratic form $x\mapsto D^2_xq(z)$. 
	Suppose that $z\in \Lambda_+(q,e)$ has multiplicity one with respect to $q$
	and that the associated tangent space is $T_z\Lambda_+(q,e)$. We use 
	the notation $\left.H_q(z)\right|_{T_z}:T_z\Lambda_+(q,e)\rightarrow T_z\Lambda_+(q,e)$ 
	to denote the self-adjoint linear map associated with the restriction of 
	the quadratic form $x\mapsto D^2_xq(z)$ to the subspace $T_z\Lambda_+(q,e)$. (See Section~\ref{sec:ex} for concrete examples.) Equivalently, we can also think of $\left.H_q(z)\right|_{T_z}$ as 
	the Hessian of the polynomial $\tilde{q} = \left. q\right|_{T_z\Lambda_+(q,e)}$ obtained 
	by restricting $q$ to the subspace $T_z\Lambda_+(q,e)$. 
	%\todo{Perhaps add some extra sentence about the motivation for $\left.H_q(z)\right|_{T_z}$. }
	% moved into the proof
	%By the first part of Theorem~\ref{thm:renegar-hessian}, $\left.H_q(z)\right|_{T_z}$
	%is negative semidefinite.

	We now give an explicit description of the 
	span of a face of a hyperbolicity cone. This can be combined 
	with Proposition~\ref{prop:face-rep} to give an explicit description of a face of 
	a hyperbolicity cone, given a point in the relative interior of the face.
\begin{proposition}
	\label{prop:spf}
	Suppose that $p$ is hyperbolic with respect to $e$ and $\stdFace$ is a face
	of $\Lambda_+(p,e)$. Let $z\in \reInt(\stdFace)$, let $m= \textup{mult}_p(z)$,
	and let $q(x) = D^{m-1}_ep(x)$. 
	Then 
	\begin{equation}\label{eq:prop:spf}
	 \lspan(\stdFace) = \{x\in T_z\Lambda_+(q,e) \;:\; \left.H_q(z)\right|_{T_z} x = 0\}.
	\end{equation}
\end{proposition}
\begin{proof}
	If $\stdFace$ is a face of $\Lambda_+(p,e)$ of multiplicity $m$ then 
	$\stdFace$ is also a face of $\Lambda_+(q,e)$ of multiplicity $1$~\cite[Proposition 25]{Re06}. 
	As such, it is enough to establish the result when $m=1$ and $p=q$. 

	For the ``$\supseteq$'' inclusion, assume that $x\in T_z\Lambda_+(q,e)$ and 
	that $\left.H_q(z)\right|_{T_z}x = 0$. It follows that $D_x^2q(z) = 0$. 
	Since $z\in \Lambda_+(q,e)$ and $\textup{mult}_q(z)=1$, it follows from 
	Theorem~\ref{thm:renegar-hessian} that $q(z+tx) = 0$ for all $t$ and 
	that there exists $\epsilon>0$ such that 
	$z+tx\in \Lambda_+(q,e)$ for all $t\in (-\epsilon,\epsilon)$. 
	Let $\hat{\epsilon} = \epsilon/2$.
	Since $\stdFace$ 
	is a face of $\Lambda_+(q,e)$, $z\pm\hat{\epsilon}x \in \Lambda_+(q,e)$  and $2z = (z-\hat{\epsilon} x)+ (z+\hat{\epsilon} x)$, it follows that 
	$z-\hat{\epsilon} x,z+\hat{\epsilon} x\in \mathcal{F}$.
	Taking a linear combination of these points we see that 
	$[(z+\hat{\epsilon} x)-(z-\hat{\epsilon} x)]/(2\hat{\epsilon}) = x\in \lspan(\stdFace)$.

	We now consider the reverse inclusion. 
	First, we will check that 
	\begin{equation}\label{eq:renegar_face}
	\stdFace  \subseteq T_z\Lambda_+(q,e).
	\end{equation}
	Let $y \in \stdFace$. Since $z+ty\in \stdFace$ for all $t\geq 0$ and $\stdFace$ is a face of multiplicity
	one, it follows that $z+ty$ has multiplicity at least one for all $t \geq 0$.
	This implies that $q(z+ty) = 0$ for all $t \geq 0$. Since $q$ is a polynomial, we have that, in fact, 
	$q(z+ty) = 0$ for all $t \in \Re$. Therefore 
	$D_yq(z) = 0$, which shows that $y \in T_z\Lambda_+(q,e)$, by the definition of 
	$T_z\Lambda_+$ in~\eqref{eq:tan}. Furthermore, we can also conclude that $D_y^2q(z) = 0$.

		It remains to show that $y\in \stdFace$ implies that $\left.H_q(z)\right|_{T_z}y = 0$.
		%To do this, it is enough to show that $x\in \lspan(\stdFace)$ implies that $D_x^2q(z)=0$. 
		Now, the quadratic form $x\mapsto D_x^2q(z)$ is negative semidefinite 
		restricted to $T_z\Lambda_+(q,e)$ (from Theorem~\ref{thm:renegar-hessian}) and 
		$y \in T_z\Lambda_+(q,e)$ by \eqref{eq:renegar_face}. Therefore, $D_y^2q(z) = 0$ implies that 
		$y$ is in the nullspace of the associated linear map, which is  $\left.H_q(z)\right|_{T_z}$.
		This establishes that $\stdFace$ is contained in the right-hand side of \eqref{eq:prop:spf}.
		 Since the right-hand side of \eqref{eq:prop:spf} is a subspace, it follows 
		 that it also contains $\lspan \stdFace$.

%	Now suppose that $x\in \lspan(\stdFace)$. Since the right-hand-side of \ref{eq:prop:spf} is a subspace, it follows 
%	Since $T_z\Lambda_+(q,e)$ is a subspace, \eqref{eq:renegar_face} implies that $x \in T_z\Lambda_+(q,e)$.
%	Because $z\in \reInt(\stdFace)$, there exists $\epsilon>0$ such that 
%	$z + t x\in \reInt(\stdFace)$ for all $t\in (-\epsilon, \epsilon)$. 
%	Since $\reInt(\stdFace)$ consists of points of 
%	$\Lambda_+(q,e)$ of multiplicity one (by Lemma~\ref{lem:mult-constant}), 
%	we have $\textup{mult}_q(z+tx)=1$ for all $t\in (-\epsilon, \epsilon)$.
%	We note also that Lemma~\ref{lem:iz} implies 
%	that $q(z+tx) = 0$.
%	Then, the conditions for the first alternative of  
%	Theorem~\ref{thm:renegar-hessian} are met, which 
%	implies that $D_x^2q(z)=0$, as required. 
\end{proof}

\subsection{Illustrative examples}\label{sec:ex}
\begin{figure}
	\begin{center}
	\begin{tikzpicture}
		\node at (0,0) {\includegraphics[scale=0.2]{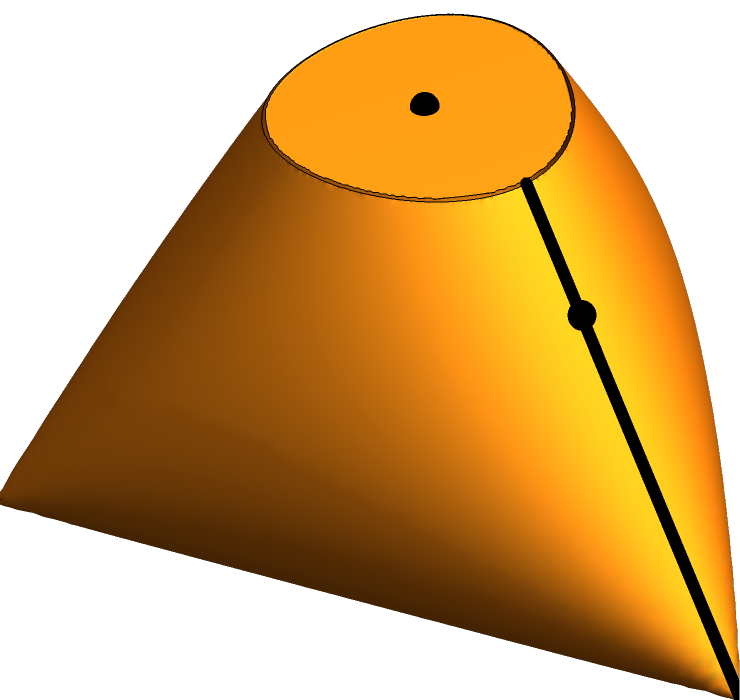}};
		\node at (5,0) {\includegraphics[scale=0.5]{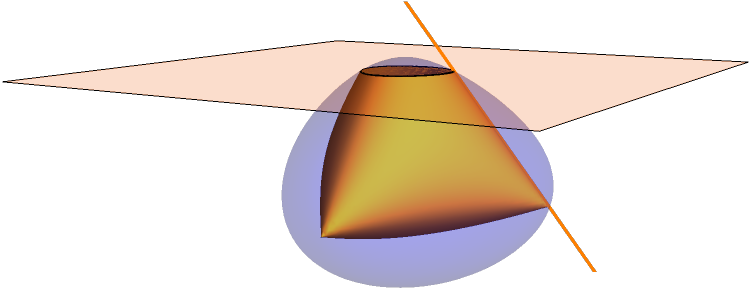}};
		\node at (0.2,1) {$\tilde{z}$};
		\node at (0.5,0) {$\tilde{w}$};
	\end{tikzpicture}
	\end{center}
	\caption{\label{fig:ill} On the left is an affine slice of $\Lambda_+(q,e)$, showing the points
	$\tilde{z}$ and $\tilde{w}$ in the relative interiors of the faces of interest. On the right are (rotations of)
	the same affine slices of $\Lambda_+(q,e)$, the relaxation $\Lambda_+(D_eq,e)$, and the spans of 
	the two faces of interest.}
\end{figure}
In this section we apply the results of Propositions~\ref{prop:face-rep} and~\ref{prop:spf} to 
give explicit descriptions of two faces of the hyperbolicity cone
associated with the polynomial 
\[ q(x_1,x_2,x_3,x_4) = (x_1x_2x_3+x_1x_2x_4+x_1x_3x_4+x_2x_3x_4)(x_1+x_2+x_3-x_4/2). \]
The polynomial $q$ is hyperbolic with respect to $e=(1/4,1/4,1/4,1/4)$. For the
purposes of visualization, we consider the three dimensional affine section
$x_1+x_2+x_3+x_4=1$ of the hyperbolicity cone $\Lambda_+(q,e)$, under the affine
isomorphism that sends the standard basis vectors in $\RR^4$ to the vertices % $v_1,v_2,v_3,v_4$ of a 
of a regular tetrahedron. The resulting convex body is shown in Figure~\ref{fig:ill}. 
It is the intersection of the elliptope with an affine halfspace.

As a first example, consider the face with $\tilde{z}$ in its relative interior, 
shown in Figure~\ref{fig:ill}.
This corresponds to a three-dimensional face of the cone $\Lambda_+(q,e)$ 
with the point $z = (1/9,1/9,\linebreak1/9,2/3)$ in its relative interior. This point has
multiplicity one with respect to $q$, because only the linear
factor of $q$ vanishes at $z$. 
Proposition~\ref{prop:face-rep} tells us that
the face is the intersection of its span with the derivative relaxation
$\Lambda_+(D_eq,e)$. This derivative relaxation, together with the span 
of the face, are shown in Figure~\ref{fig:ill}. 

It is clear from Figure~\ref{fig:ill} that the span of the face
is just the hyperplane corresponding to the linear factor in $q$, i.e., 
$\{(x_1,x_2,x_3,x_4)\;:\; x_1+x_2+x_3-x_4/2=0\}$. We can also 
obtain this from Proposition~\ref{prop:spf}.  Since 
\[ \nabla q(z) = \frac{19}{1458}\begin{bmatrix}2\\2\\2\\-1\end{bmatrix}\;\;\textup{and}\;\; 
	D_x^2q(z) = \frac{26}{81}\left(x_1+x_2+x_3-\frac{x_4}{2}\right)\left(x_1+x_2+x_3+\frac{3x_4}{13}\right),\]
	%H_q(z) = \frac{1}{162}
	%\begin{bmatrix} 52 & 52 & 52 & -7\\
%		52 & 52 & 52 & -7\\
%		52 & 52 & 52 & -7\\
%	-7 & -7 & -7 & -6\end{bmatrix}\]
	it follows that $T_{z}\Lambda_+(q,e)  = \{(x_1,x_2,x_3,x_4)\;:\; x_1+x_2+x_3-x_4/2 = 0\}$. Consequently the restriction of $x\mapsto D_x^2q(z)$ to $T_z\Lambda_+(q,e)$ 
	is identically zero, and so $\left.H_q(z)\right|_{T_z} = 0$.
	Hence Proposition~\ref{prop:spf} tells us that 
	the span of the face is just $T_z\Lambda_+(q,e)$ in this case.

	As a second example, consider the face with 
	$\tilde{w}$ in its relative interior, shown in Figure~\ref{fig:ill}.
	This corresponds to a two-dimensional
	face of $\Lambda_+(q,e)$ with the point $w = (1/2,0,0,1/2)$ in its
	relative interior. This point has multiplicity one with respect to $q$
	since the cubic factor of $q$ vanishes to order one at $w$ and the linear
	factor does not vanish at $w$. 
	Again Proposition~\ref{prop:face-rep} tells us that the face is the intersection of
	its span with the derivative relaxation $\Lambda_+(D_eq,e)$, 
	as shown in Figure~\ref{fig:ill}. 

	We now show how to compute the span of this face using 
	Proposition~\ref{prop:spf}. Since 
	\[ \nabla q(w) = \frac{1}{16}\begin{bmatrix} 0\\1\\1\\0\end{bmatrix}\quad\textup{and}\quad D_x^2q(w) = \frac{3x_1}{4}\left(x_2+x_3\right) + \frac{1}{2}\left(x_2^2+3x_2x_3+x_3^2\right)\]
		%\quad H_q(w) = (1/8)\begin{bmatrix} 0 & 3 & 3 & 0\\3 & 4 & 6 & 0\\3 & 6 & 4&0\\0&0&0&0\end{bmatrix}\]
		it folllows that 
		$T_w\Lambda_+(q,e)= \{(x_1,x_2,x_3,x_4)\;:\; x_2+x_3=0\}$.
		If we express elements of $T_w\Lambda_+(q,e)$ in the form
		$(y_1,y_2,-y_2,y_3)$ for $y\in \RR^{3}$ then we have that 
		\[D_{(y_1,y_2,-y_2,y_3)}^2q(w) = -y_2^2/2\quad\textup{and so}\quad
		\left.H_w(q)\right|_{T_w}  
			= \begin{bmatrix} 0 & 0 & 0\\0 & -1/2 & 0\\0 & 0 & 0\end{bmatrix}.\]
				Clearly, then, $\left.H_w(q)\right|_{T_w}y = 0$
				if and only if $y_2=0$. 
				From Proposition~\ref{prop:spf} we get that 
				\[ \textup{span}(\mathcal{F}) = 
				\{(y_1,y_2,-y_2,y_3)\;:\; y_2=0,\; y_1\in \RR,\;y_3\in \RR\} = \{(y_1,0,0,y_3)\;:\; y_1\in \RR,\;y_3\in \RR\}\]
				which is just the span of the first and last 
				coordinate directions.

\section{Amenability of hyperbolicity cones}
\label{sec:pf-hyp_am}

In this brief section we establish that every hyperbolicity cone is amenable
(Theorem~\ref{thm:hyp_am}).  The argument combines the standard error bound
from Proposition~\ref{prop:error} and our new representation of faces of
hyperbolicity cones from Proposition~\ref{prop:face-rep}.
\begin{proof}[{Proof of Theorem~\ref{thm:hyp_am}}]
	Let $p$ be hyperbolic with respect to $e\in \RR^n$, and let $\stdFace \face \Lambda_+(p,e)$ be a face of 
	the associated hyperbolicity cone. By Proposition~\ref{prop:face-rep}, 
	there exists some derivative relaxation $\Lambda_+^{(m)}$ satisfying the following two properties:
	\[
		\stdFace \cap (\interior \Lambda_+^{(m)} ) \neq \emptyset \quad\textup{and}\quad \stdFace = \lspan(\stdFace) \cap \Lambda_+^{(m)}.
	\]
	In particular, $\lspan (\stdFace) \cap (\interior \Lambda_+^{(m)} ) \neq \emptyset $, and so by 
	Proposition~\ref{prop:error}, there exists $\kappa > 0$ such that for all $x \in \Re^n$ we have
	\begin{align*}
	\dist(x, \stdFace) &\leq \kappa \max(\dist(x,\lspan \stdFace),\dist(x,  \Lambda_+^{(m)} ))\\
	& \leq \kappa \max (\dist(x,\lspan \stdFace),\dist(x,  \Lambda_+(p,e)),
	\end{align*}
	where the last inequality follows from the 
	fact that $\Lambda_+(p,e) \subseteq \Lambda_+^{(m)}$.
	This implies that\[
		\dist(x, \stdFace) \leq \kappa \dist(x,  \Lambda_+(p,e)) \quad \textup{for all $x \in \lspan \stdFace$.}
	\]
	Therefore $\Lambda_+(p,e)$ is amenable.
\end{proof}

\section{Discussion}
\label{sec:discussion}
It could be fruitful to examine in more depth the geometry and facial structure of hyperbolicity cones 
and check how it compares with spectrahedral cones. One recent result in this style is that the 
tangent cone to a hyperbolicity cone at a point is, again, a hyperbolicity 
cone~\cite[Theorem 5.9]{saunderson2020terracini}, which parallels the analogous result for spectrahedral 
cones. For a direction where many questions remain open, it might be interesting to
see how spectrahedral cones and hyperbolicity cones fare with regards to
\emph{projectional exposedness}. 

We say that a closed convex cone $\stdCone$ is {projectionally exposed} if for every face $\stdFace \face \stdCone$ there exists a  projection $P$ (not necessarily orthogonal) such that $P(\stdCone) = \stdFace$, see \cite{BW81,ST90}.
This notion appears in connection to facial reduction algorithms and approaches for regularizing convex conic optimization problems. It turns out that projectional exposedness implies amenability \cite[Proposition~9]{L19}
and in dimension four or less, amenability implies projectional exposedness \cite[Corollary~6.4]{LRS20}.

So far, the largest classes of hyperbolicity cones which are known to be projectionally exposed are the symmetric cones (\cite[Proposition~33]{L19}) and the polyhedral cones (\cite[Corollary 3.4]{ST90}). However, \cite[Corollary~6.4]{LRS20} and Theorem~\ref{thm:hyp_am} imply that all hyperbolicity cones in dimension four or less are projectionally exposed, which is a curious fact that, at this moment, does not seem to have an obvious algebraic explanation.

\section*{Acknowledgements}
Bruno F. Louren\c{c}o is supported partly by the JSPS Grant-in-Aid for Early-Career Scientists 19K20217 and the Grant-in-Aid for Scientific Research (B)18H03206.

Vera Roshchina is grateful to the Australian Research Council for continuing support. Specifically, her ARC DECRA grant DE150100240 helped sustain the initial collaboration on this project.

James Saunderson is the recipient of an Australian Research Council Discovery Early Career Researcher Award 
(project number DE210101056) funded by the Australian Government.

\bibliographystyle{abbrvurl}
\bibliography{refs}

\begin{thebibliography}{10}

\bibitem{amini2018non}
N.~Amini and P.~Br{\"a}nd{\'e}n.
\newblock Non-representable hyperbolic matroids.
\newblock {\em Advances in Mathematics}, 334:417--449, 2018.

\bibitem{BBL99}
H.~H. Bauschke, J.~M. Borwein, and W.~Li.
\newblock Strong conical hull intersection property, bounded linear regularity,
  {J}ameson's property ({G}), and error bounds in convex optimization.
\newblock {\em Mathematical Programming}, 86(1):135--160, 1999.

\bibitem{BW81}
J.~M. Borwein and H.~Wolkowicz.
\newblock Regularizing the abstract convex program.
\newblock {\em Journal of Mathematical Analysis and Applications}, 83(2):495 --
  530, 1981.

\bibitem{gaarding1959inequality}
L.~G{\aa}rding.
\newblock An inequality for hyperbolic polynomials.
\newblock {\em Journal of Mathematics and Mechanics}, pages 957--965, 1959.

\bibitem{helton2007linear}
J.~W. Helton and V.~Vinnikov.
\newblock Linear matrix inequality representation of sets.
\newblock {\em Communications on Pure and Applied Mathematics: A Journal Issued
  by the Courant Institute of Mathematical Sciences}, 60(5):654--674, 2007.

\bibitem{lewis2005lax}
A.~Lewis, P.~Parrilo, and M.~Ramana.
\newblock The {L}ax conjecture is true.
\newblock {\em Proceedings of the American Mathematical Society},
  133(9):2495--2499, 2005.

\bibitem{L19}
B.~F. Louren\c{c}o.
\newblock Amenable cones: error bounds without constraint qualifications.
\newblock {\em Mathematical Programming}, 186:1--48, March 2021.
\newblock \href {http://arxiv.org/abs/1712.06221} {\path{arXiv:1712.06221}}.

\bibitem{LRS20}
B.~F. Louren\c{c}o, V.~Roshchina, and J.~Saunderson.
\newblock Amenable cones are particularly nice.
\newblock {\em SIAM Journal on Optimization}, 32(3):2347--2375, September 2022.
\newblock \href {http://arxiv.org/abs/2011.07745} {\path{arXiv:2011.07745}}.

\bibitem{pataki}
G.~Pataki.
\newblock On the connection of facially exposed and nice cones.
\newblock {\em J. Math. Anal. Appl.}, 400(1):211--221, 2013.

\bibitem{PL88}
G.~D. Poole and M.~Laidacker.
\newblock Projectionally exposed cones in ${R}^3$.
\newblock {\em Linear Algebra and its Applications}, 111:183 -- 190, 1988.

\bibitem{Re06}
J.~Renegar.
\newblock Hyperbolic programs, and their derivative relaxations.
\newblock {\em Foundations of Computational Mathematics}, 6(1):59--79, 2006.

\bibitem{Vera}
V.~Roshchina.
\newblock Facially exposed cones are not always nice.
\newblock {\em SIAM J. Optim.}, 24(1):257--268, 2014.

\bibitem{saunderson2019certifying}
J.~Saunderson.
\newblock Certifying polynomial nonnegativity via hyperbolic optimization.
\newblock {\em SIAM Journal on Applied Algebra and Geometry}, 3(4):661--690,
  2019.

\bibitem{saunderson2020terracini}
J.~Saunderson and V.~Chandrasekaran.
\newblock Terracini convexity.
\newblock {\em Mathematical Programming}, March 2022.
\newblock \href {http://arxiv.org/abs/2010.00805} {\path{arXiv:2010.00805}}.

\bibitem{ST90}
C.-H. Sung and B.-S. Tam.
\newblock A study of projectionally exposed cones.
\newblock {\em Linear Algebra and its Applications}, 139:225 -- 252, 1990.

\end{thebibliography}
\end{document}